\newtheorem{thm}{Theorem}[section]
\newtheorem{prop}[thm]{Proposition}
\newtheorem{lemma}[thm]{Lemma}
\begin{document}

\title{ Systems of two subspaces in a Hilbert space}

\author{Masatoshi Enomoto}    
\address[Masatoshi Enomoto]{Koshien University, Takarazuka, Hyogo 665-0006, 
Japan}
\email{enomotoma@hotmail.co.jp}

\author{Yasuo Watatani}
\address[Yasuo Watatani]{Department of Mathematical Sciences, 
Kyushu University,  
Fukuoka, Japan}
\email{watatani@math.kyushu-u.ac.jp}
\maketitle
\begin{abstract}
We study two subspace systems in a separable 
infinite-dimensional Hilbert space up to (bounded) isomorphism. One of the main result of this paper is
the following:
Isomorphism classes of two subspace
 systems given by graphs of bounded 
operators
are determined by 
unitarily equivalent classes of the operator ranges and the nullity of the original bounded operators giving graphs.
We construct several non-isomorphic examples of two subspace systems  in an 
infinite-dimensional Hilbert space.

\medskip\par\noindent
KEYWORDS: Subspace, Hilbert space,
Schatten class operator.

\medskip\par\noindent
AMS SUBJECT CLASSIFICATION: 46C07, 47A15, 16G20, 16G60.

\end{abstract}

\section{Introduction}

Let $E_{1}$ and $E_{2}$ 
be two  closed subspaces in a Hilbert space
$H$, then 
we say that 
$(H;E_{1},E_{2})$
is a two subspace system in $H$
or a system of two subspaces in $H$.
Let 
$(L;F_{1},F_{2})$
be another two subspace system in $L$.
We say that $(H;E_{1},E_{2})$
and 
$(L;F_{1},F_{2})$ are unitarily (resp.boundedly, algebraically) isomorphic 
if there exists a unitary operator (resp. bounded invertible operator,
invertible operator) $V$ of $H$ to $L$ such that 
$V(E_{1})=F_{1}$
and
$V(E_{2})=F_{2}$. 
Unitary isomorphism classes of  two subspace systems are studied  by 
many authors (cf.H.Araki\cite{A}, C.Davis\cite{Da}, J.Dixmier\cite{D}, 
P.Halmos\cite{H}, M.Stone\cite{S} etc.).

It is easy to see that 
two subspace systems
 $(H;E_{1},E_{2})$
and
$(L;F_{1},F_{2})$
are algebraically isomorphic
if and only if 
${\rm Hdim}(E_{1}\cap E_{2}) ={\rm Hdim}(F_{1}\cap F_{2})$,
${\rm Hdim}(E_1/(E_{1}\cap E_{2})) = 
{\rm Hdim}(F_1/(F_{1}\cap F_{2}))$,  
${\rm Hdim}(E_2/(E_{1}\cap E_{2})) 
= {\rm Hdim}(F_2/(F_{1}\cap F_{2}))$ 
and 
${\rm Hdim}(H/(E_{1} + E_{2})) = 
{\rm Hdim}(L/(F_{1} +  F_{2}))$, 
where ${\rm Hdim(}K)$ is a Hamel dimension of a vector space $K$. 
For a Hilbert space, we denote by $\dim H$ the Hilbert space dimension of $H$, 
that is, the cardinality of an  
orthonormal basis (or a complete orthonormal system) of $H$.

These three types of isomorphisms
(unitary isomorphisms, bounded isomorphisms and algebraic isomorphisms)
are different each other.
Unitary isomorphisms and bounded isomorphisms of two subspace systems 
are distinguished by angles.
Bounded isomorhisms and 
algebraic isomorhisms
of two subspace systems
are also distinguished. For example, 
put $a_{n}=1/n$ and 
and  $b_{n}=1/n^2$.
Let 
$A$ be the diagonal operator with diagonals  $(a_{n})_n$
and
$B$ be the diagonal operator with diagonals  $(b_{n})_n$
on $K=\ell^2(\mathbb{N})$. Put $H=K\oplus K.$
Then 
two subspace systems 
$(H;K\oplus 0, graph(A))$ 
and 
$(H;K\oplus 0, graph(B))$ 
are algebraically  isomorphic,
but not boundedly isomorphic,
since $A$ and $B$ belong to different Schatten classes.
Bounded isomorphisms of systems of two subspaces 
have not been studied extensively compared with unitary isomorphisms.

In this paper 
we study two subspace systems up to  bounded isomorphism.
For this purpose, it is crucially important to investigate operator ranges.
We recall an important paper \cite{FW}  by Fillmore-Williams, 
which studies operator ranges.

One of the main result of this paper is
the following:
Isomorphism classes of two subspace
 systems given by graphs of bounded 
operators
are determined by 
unitarily equivalent classes of the operator ranges and the nullity of the original bounded operators giving graphs.
We describe a relation among derived three subspaces associated with two subspaces, 
$A_{2}$-Dynkin quiver
 and operator ranges.
We give several examples of two subspace systems.

The classification  problem of n subspaces in a Hilbert space 
up to unitary isomorphism arises naturally.
But the problem for $n\geq 3$ is 
*-wild in the sense of S.Kruglyak and Y.Samoilenko\cite{KS}
and extremely difficult.
See also S. Kruglyak, V. Rabanovich and Y. Samoilenko\cite{KRS},Y. Moskaleva and Y. Samoilenko\cite{MS} 
and Sunder\cite{Su} for the study of n subspaces .
We study three subspaces \cite{EW3} and n subspaces \cite{EW1} up to bounded
isomorphism which is weaker than unitary isomorphism.
We should remark that in our former papers we just called  "isomorphism"
for "bounded isomorphism". 
 
This work was supported by JSPS KAKENHI Grant Numbers 
JP 23654053 and JP 17K18739.
This work was also supported by the Research Institute for Mathematical Sciences,
a Joint Usage/Research Center located in Kyoto University.

\section{Two subspace systems given by graphs of operators}

In \cite{EW1},we studied  
several subspaces
in an infinite dimensional Hilbert space 
in general.
In this paper,
we classify   two subspaces
up to bounded
isomorphism
in a separable  Hilbert space.

\parskip=8pt
\noindent
{\bf Definition.}
For Hilbert spaces $H_{1}$ and $H_{2}$,
we denote by $B(H_{1},H_{2})$
 the set of bounded operators of 
$H_{1}$ to $H_{2}$.
For $H_{1}=H_{2}=H$, we denote by $B(H)$ the algebra of bounded operators 
on a  Hilbert space $H$ .
An operator range  in a Hilbert space $H$ is a linear subspace of $H$ that
is the range of some bounded operator on $H$.
If a vector space $\mathscr{R}$ is the range of a closed operator on $H$,
then $\mathscr{R}$ is an operator range.
Operator ranges $\mathscr{R}$ and $\mathscr{S}$ in $H$ are 
called similar,
if there is an invertible operator $T\in B(H)$
such that
$\mathscr{S}=T \mathscr{R}$,
and
unitarily equivalent,
if $T$ can be taken to be unitary.
Operator ranges are similar if and only if
they are unitarily equivalent (cf. \cite[Theorem 3.1]{FW}).
Let  $T$ be a densely defined closed operator on $H$. Then we denote by $Dom(T)$ the domain of $T$, by $ran T$ the set $T(H)$ and
by $ker T$ the set $T^{-1}(\{0\})$.
We denote by $C(H)$ the set of compact operators on a Hilbert space $H$. 
For vectors $x$ and $y$ in $H$, 
the symbol $x\otimes y$
represents an operator on $H$ such that 
$(x\otimes y)z=<z,y>x$
for $z\in H$.

Any compact operator $T$ on a Hilbert space $H$ has a form 
$T=\sum_{j=1}^{N}\mu_{j}(T)v_{j}\otimes u_{j} $
where $N=rank(T)$,
$\mu_{1}(T)\geq \mu_{2}(T)\geq \cdots \geq \mu_{N}(T)$
(or $\geq \cdots$ if $N=\infty$),
$(u_j)_j, (v_j)_j$
are orthogonal
families and 
$\mu_{j}(T)$
is 
the j-th eigenvalues
of 
$\vert T\vert.$ 
Using min-max principle we have 
the following known fact.
For any $T\in C(H)$
and $X\in B(H)$,
$\mu_{j}(T)=\mu_{j}(T^*)$,
$\mu_{j}(XT)\leq \vert\vert X\vert\vert\mu_{j}(T)$
and 
$\mu_{j}(TX)\leq \vert\vert X\vert\vert\mu_{j}(T)$.

The (extended) Schatten class for $\alpha>0$
is 
$$
C^{\alpha}(H):=\{T\in B(H);Tr(\vert T\vert^{\alpha})<\infty\},
$$
see MacCarthy[M] for the case that $0<\alpha<1$. 
For $\alpha>0$, we say that a sequence $(a_{n})_n$ of complex numbers 
is in $\ell^{\alpha}(\mathbb{N}$) 
if 
$\sum\vert a_{n}\vert^{\alpha}<\infty $. 

\noindent
{\bf Definition.} Let $H$ be a Hilbert space and 
$E_1, \dots E_n$ be $n$ closed subspaces 
in $H$.  Then we say that  ${\mathcal S} = (H;E_1, \dots , E_n)$  
is a system of $n$-subspaces in $H$ or an $n$-subspace system in $H$. 
Let ${\mathcal T} = (K;F_1, \dots , F_n)$  
be  another system of $n$-subspaces in a Hilbert space $K$. Then  
$\varphi : {\mathcal S} \rightarrow {\mathcal T}$ is called a bounded
homomorphism if $\varphi : H \rightarrow K$ is a bounded linear 
operator satisfying that  
$\varphi(E_i) \subset F_i$ for $i = 1,\dots ,n$. And 
$\varphi : {\mathcal S} \rightarrow {\mathcal T}$
is called a bounded isomorphism if $\varphi : H \rightarrow K$ is 
a bounded invertible linear 
operator satisfying that  
$\varphi(E_i) = F_i$ for $i = 1,\dots ,n$. 
We say that systems ${\mathcal S}$ and ${\mathcal T}$ are 
{\it bounded isomorphic} if there is a bounded isomorphism  
$\varphi : {\mathcal S} \rightarrow {\mathcal T}$.

\noindent
{\bf Definition.} Let $A: K_1 \rightarrow K_2$ be a closed operator of a  
Hilbert space $K_1$ to a Hilbert space $K_2$. Let $H = K_1 \oplus K_2$. 
Then the two subspace system $(H;K_1\oplus 0,graph(A))$ is said to be 
given by a $graph(A)$ of the operator $A$.

\noindent
{\bf Definition.}
Let $(H;E_{1},E_{2})$ be a two subspace system.
Then we call the three subspace system 
$(H;E_{1},E_{1}^{\perp},E_{2})$ is 
the derived three subspace system (or the derived three subspaces)
of $(H;E_{1},E_{2})$.

In this paper we mainly discuss 
isomorphisms by bounded invertible operators between $n$ subspace systems.  

The following known fact is useful to study bounded isomorphisms.
   
\begin{lemma} {\rm (}\cite[Lemma2.1]{EW1}{\rm)}
 Let $H$ be a Hilbert space and $H_1$ and $H_2$ be 
two closed subspaces of $H$.  Then the following are equivalent:
\begin{enumerate}
\item $H = H_1 + H_2$ \ and  \ $H_1 \cap H_2 = 0$.
\item There exists a closed subspace $M \subset H$ such that
$(H;H_1,H_2)$ is boundedly isomorphic to  $(H;M,M^{\perp})$
\item There exists an idempotent $P \in B(H)$ such that 
$H_1 =ran P$ and $H_2 = ran(1-P)$. 
\end{enumerate}
\label{lemma:decompose}
\end{lemma}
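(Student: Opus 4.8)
The plan is to establish the cyclic chain of implications $(1) \Rightarrow (3) \Rightarrow (2) \Rightarrow (1)$, which is the most economical route through the three conditions.

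For $(1) \Rightarrow (3)$, I would use the hypotheses to build the oblique projection directly. Since $H = H_1 + H_2$ and $H_1 \cap H_2 = 0$, every $x \in H$ has a unique decomposition $x = x_1 + x_2$ with $x_i \in H_i$, so setting $Px := x_1$ defines a linear idempotent with $\operatorname{ran} P = H_1$ and $\operatorname{ran}(I-P) = H_2$. The only substantive point is boundedness of $P$, which I would get from the closed graph theorem: if $x_n \to x$ and $Px_n \to y$, then $y \in H_1$ because $H_1$ is closed, while $x - y = \lim (I-P)x_n \in H_2$ because $H_2$ is closed; uniqueness of the decomposition then forces $Px = y$, so $P$ has closed graph and is bounded. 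Here the closedness of \emph{both} subspaces does the essential work, and I expect this to be the cleanest of the three steps.

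For $(3) \Rightarrow (2)$, given a bounded idempotent $P$, I would set $M := H_1 = \operatorname{ran} P$, let $E$ be the orthogonal projection onto $M$, and define the candidate isomorphism $V := EP + (I-E)(I-P)$. One checks that $V$ fixes $H_1$ pointwise, since for $x \in H_1$ we have $Px = x$ and hence $Vx = Ex = x$, and that $V$ carries $H_2$ into $M^\perp$, since for $x \in H_2$ we have $Vx = (I-E)x \in M^\perp$. Injectivity and surjectivity follow by decomposing vectors along $H = H_1 \oplus H_2$ and along $H = M \oplus M^\perp$; in particular the identity $(I-E)(I-P)y = y$ for $y \in M^\perp$ shows $V(H_2) = M^\perp$. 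The bounded inverse theorem then upgrades the bounded bijection $V$ to a bounded isomorphism carrying $(H;H_1,H_2)$ to $(H;M,M^\perp)$. I expect this to be the main obstacle, as one must produce an explicit operator that straightens the oblique decomposition into an orthogonal one and then verify its invertibility.

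For $(2) \Rightarrow (1)$, if $V$ is a bounded isomorphism with $V(H_1) = M$ and $V(H_2) = M^\perp$, then applying the linear bijection $V^{-1}$ to the relations $M + M^\perp = H$ and $M \cap M^\perp = 0$ transports them to $H_1 + H_2 = H$ and $H_1 \cap H_2 = 0$. This last step is routine and uses only that $V^{-1}$ commutes with sums and intersections of subspaces.
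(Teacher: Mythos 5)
Your proof is correct, but note that the paper itself gives no proof of this lemma: it is quoted verbatim from \cite[Lemma 2.1]{EW1}, so there is no in-paper argument to compare against. Your cyclic chain $(1)\Rightarrow(3)\Rightarrow(2)\Rightarrow(1)$ is the standard and complete way to fill this gap: the closed graph theorem argument for boundedness of the oblique projection is exactly right (and correctly uses closedness of \emph{both} subspaces), the straightening operator $V=EP+(I-E)(I-P)$ does carry $(H;H_1,H_2)$ onto $(H;M,M^{\perp})$ --- your computation $(I-E)(I-P)y=y$ for $y\in M^{\perp}$ settles surjectivity, and injectivity follows from $Vx_1+Vx_2=x_1+(I-E)x_2$ with the two summands lying in $M$ and $M^{\perp}$ respectively --- and the transport of sums and intersections under the bijection $V^{-1}$ in the last step is routine, as you say. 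One small point worth making explicit: in $(3)\Rightarrow(2)$ you implicitly use that a bounded idempotent yields $H=\operatorname{ran}P\oplus\operatorname{ran}(I-P)$ with trivial intersection (i.e., that $(3)\Rightarrow(1)$ holds), which is immediate from $x=Px+(I-P)x$ and $Px=x$, $x=(I-P)y$ forcing $x=P(I-P)y=0$; stating this would make the decomposition step in your injectivity argument self-contained.
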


The following result is well known as in P.Halmos \cite{H}.
Let $M$ and $N$ be closed subspaces of a Hilbert space 
$H$. Then $M$ and $N$ are in generic position if
$$M\cap N,M\cap N^{\perp},M^{\perp}\cap N,M^{\perp}\cap N^{\perp}$$
are zero. For any such pair $M,N$,
there exist a Hilbert space $K$ and a closed linear operator $T$ having domain and range dense in $K$
and zero kernel, such that 
a unitary operator of $H$ onto
$K\oplus K$ carries $M$ to
$K\oplus 0$ and carries $N$ to the graph of $T$.
The linear operator $T$ can be chosen self-adjoint and positive, and if it is chosen so,
then it is unique up to 
unitary equivalence.

In bounded isomorphisms case,
the situation is completely different.
Let $K=\mathbb{C}^2$
and $
T_{1}
=
\begin{pmatrix}
1 & 0 \\
0 & 1/2
\end{pmatrix}
,
T_{2}=\begin{pmatrix}
1 & 0 \\
0 & 1/3
\end{pmatrix}
$.
Then
$T_{1}$ and
$T_{2}$ 
are not similar, but the  two systems
$(H;K\oplus 0,graph(T_{1}))$
and
$(H;K\oplus 0,graph(T_{2}))$
are boundedly isomorphic.

\noindent
{\bf Example.} 
Consider a sequence $(\theta_{n})_n$ such that $0<\theta_{n}<\pi/2$ and 
$\theta_{n}\to 0 (n\to\infty)$.
Let $C$(resp.$S$) be the diagonal operator
with diagonals
$(\cos\theta_{n})_n$
 (resp.
$(\sin\theta_{n})_n)$.
Let $K=\ell^2(\mathbb{N})$
and
$H=K\oplus K$.
Let $E_{1}=K\oplus 0$
and $E_{2}=ran
\begin{pmatrix}
C^{2} & CS \\
CS & S^{2}
\end{pmatrix}
.$
Then
$(H;E_{1},E_{2})$
is not boundedly isomorphic to 
$(K\oplus K;K\oplus 0,0\oplus K)$.
In fact, if 
$(H;E_{1},E_{2})$
were isomorphic to 
$(K\oplus K;K\oplus 0,0\oplus K)$,
then 
$E_{1}+E_{2}$ must be closed since 
$K\oplus K=K\oplus 0+0\oplus K$
is closed, but 
$
E_{1}+E_{2}=K\oplus ran S
$ is not closed because 
the range of $S$ is not closed. 
This is a contradiction.

We need to  recall  Hilbert representations of 
quivers studied in \cite{EW2}.

\noindent  
{\bf Definition.} 
A quiver $\Gamma=(V,E,s,r)$ is a quadruple consisting of 
the set $V$ of vertices, the set $E$ of arrows, 
and two maps $s, r : E \rightarrow V$, which 
associate with each arrow $\alpha \in E$ its 
support $s(\alpha)$ and range  $r(\alpha)$. We 
sometimes denote by $\alpha : x \rightarrow y$ 
an arrow with $x = s(\alpha)$ and $y = r(\alpha)$. 
Thus a quiver is just a directed graph. We 
denote by $|\Gamma|$ 
the underlying undirected graph of a quiver $\Gamma$. A quiver $\Gamma$ is said to be finite 
if both $V$ and $E$ are finite sets.

\noindent  
{\bf Definition.}
Let $\Gamma=(V,E,s,r)$ be a finite quiver. We say
that $(H,f)$ is a  {\it Hilbert representation} of $\Gamma$ 
if $H=(H_{v})_{v\in V}$  is a family of  Hilbert spaces 
and $f=(f_{\alpha})_{\alpha \in E}$ is a family of
 bounded linear operators $f_{\alpha} : 
H_{s(\alpha)}\rightarrow H_{r(\alpha)}.$

\noindent  
{\bf Definition.} Let $\Gamma=(V,E,s,r)$ be a finite quiver. 
Let $(H,f)$ and $(K,g)$ be Hilbert representations of $\Gamma.$ 
A {\it bounded homomorphism} $T : (H,f) \rightarrow (K,g)$  is a 
family $T =(T_{v})_{v\in V}$ of bounded operators 
$T_v : H_v \rightarrow K_v$ satisfying,  
for any arrow $\alpha \in E$ 
$$
T_{r(\alpha)}f_{\alpha}=g_{\alpha}T_{s(\alpha)}. 
$$

Let $\Gamma=(V,E,s,r)$ be a finite quiver and 
$(H,f)$, $(K,g)$  be Hilbert representations of $\Gamma.$ 
We say that
$(H,f)$ and $(K,g)$ are {\it boundedly isomorphic}, denoted by  
$(H,f)\simeq(K,g)$, 
if there exists a bounded isomorphism $\varphi : (H,f) \rightarrow (K,g)$, 
that is, there exists a family  
$\varphi=(\varphi_{v})_{v\in V}$ of bounded invertible operators
$\varphi_{v}\in B(H_{v},K_{v})$ such that, for any arrow 
$\alpha \in E$, 
$$\varphi_{r(\alpha)}f_{\alpha}=g_{\alpha
}\varphi_{s(\alpha)}.$$

We say that $\Gamma$ is  the $A_{2}$- Dynkin quiver
if $\Gamma=(V,E,s,t)$ is an oriented graph such that 
the vertex set  of 
$\Gamma$ is $V=\{1,2\}$,
the arrow set of $\Gamma$ is $E=\{\alpha\}$
 with 
\[
\circ_1 \overset{\alpha}\longrightarrow 
\circ_2  
\]

A Hilbert representation $(H,f)$ of the $A_{2}$-Dynkin quiver $\Gamma$ is called a Hilbert representation constructed by an operator $T: H_1 \rightarrow H_2$ if
$H_{s(\alpha)}$ is a Hilbert space $H_{1}$,
$H_{r(\alpha)}$ is a Hilbert space $H_{2}$ and
$f_{\alpha}=T$.

We mainly study two subspace systems which are
given by graphs of bounded operators.
The following is  the main theorem of the paper. 

\begin{thm}
Let $K_{1},K_{2}$ be Hilbert spaces and 
$T$, $T^{\prime}$ be in $B(K_{1},K_{2}).$
We put $H=K_{1}\oplus K_{2}$.
Then the following are equivalent:\\
{\rm (1)}$(H;K_{1}\oplus 0,graph(T))$ is 
boundedly isomorphic to
 $(H;K_{1}\oplus 0,graph(T^{\prime}))$.\\
{\rm(2)}
Derived three subspace systems $(H;K_{1}\oplus 0,0\oplus K_{2},graph(T))$
and $(H;K_{1}\oplus 0,0\oplus K_{2},graph(T^{\prime}))$ are boundedly isomorphic.\\
{\rm(3)}
Hilbert representations constructed by
$T:K_{1}\to K_{2}$
and
$T^{\prime}:K_{1}\to K_{2}$
are boundedly isomorphic as  Hilbert representations
 of the $A_{2}$-Dynkin quiver.\\
{\rm(4)} Operator ranges $ran T$ and 
$ran T^{\prime}$
are unitarily equivalent and \\
 $\dim ker T=
\dim ker T^{\prime}$.\\
{\rm(5)}
There exist invertible operators $L\in B(K_{2})$ 
and $M\in B(K_{1})$
such $T=LT^{\prime}M$.
\label{pro:3sub}
\end{thm}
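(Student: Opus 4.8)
The plan is to prove the five conditions equivalent by establishing the cycle $(2)\Rightarrow(1)\Rightarrow(5)\Rightarrow(2)$ together with the lateral equivalences $(5)\Leftrightarrow(4)$ and $(5)\Leftrightarrow(3)$. Several links are essentially formal. The equivalence $(5)\Leftrightarrow(3)$ is a mere translation of definitions: a bounded isomorphism of the $A_2$-representations built from $T$ and $T'$ is precisely a pair of invertibles $\varphi_1\in B(K_1)$, $\varphi_2\in B(K_2)$ with $\varphi_2T=T'\varphi_1$, so setting $L=\varphi_2^{-1}$, $M=\varphi_1$ gives $T=LT'M$, and conversely. The implication $(2)\Rightarrow(1)$ is immediate, since a bounded isomorphism of the derived three subspace systems in particular sends $K_1\oplus 0$ and $\graph(T)$ to $K_1\oplus 0$ and $\graph(T')$, hence restricts to a bounded isomorphism of the two subspace systems.

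For $(5)\Rightarrow(2)$ I would write the isomorphism explicitly as the block-diagonal operator $\Phi=\bigl(\begin{smallmatrix}M&0\\0&L^{-1}\end{smallmatrix}\bigr)$ on $H=K_1\oplus K_2$. Being block diagonal with invertible entries, $\Phi$ preserves both $K_1\oplus 0$ and $0\oplus K_2$; and since $T=LT'M$ gives $L^{-1}T=T'M$, one checks $\Phi(x,Tx)=(Mx,L^{-1}Tx)=(Mx,T'Mx)\in\graph(T')$, so $\Phi$ carries $\graph(T)$ onto $\graph(T')$. For $(1)\Rightarrow(5)$ I would analyze an arbitrary bounded isomorphism as a $2\times 2$ operator matrix $\Phi=\bigl(\begin{smallmatrix}A&B\\C&D\end{smallmatrix}\bigr)$. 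The condition $\Phi(K_1\oplus 0)=K_1\oplus 0$ forces $C=0$ and makes $A$ invertible, whence $D$ is invertible too; the condition $\Phi(\graph(T))=\graph(T')$ becomes $DT=T'(A+BT)$. Putting $N=A+BT$ and $L=D^{-1}$ yields $T=LT'N$, and I would deduce that $N$ is invertible from the fact that $\Phi$ maps $\graph(T)$ \emph{onto} $\graph(T')$: surjectivity gives that $N$ is onto $K_1$, while injectivity of $\Phi$ gives $\ker N=0$, so $N$ is a bounded bijection, hence invertible.

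For $(5)\Rightarrow(4)$, from $T=LT'M$ with $M$ surjective we get $ran\,T=L\,ran\,T'$, so the two operator ranges are similar, hence unitarily equivalent by \cite[Theorem 3.1]{FW}; and $\ker T=M^{-1}\ker T'$ shows $\dim\ker T=\dim\ker T'$. The heart of the theorem is $(4)\Rightarrow(5)$. First, using a unitary $U\in B(K_2)$ with $U(ran\,T')=ran\,T$ and replacing $T'$ by $UT'$ (which changes neither its kernel nor, up to the factor $U$ absorbed into $L$, the desired conclusion), I may assume $ran\,T=ran\,T'=:\mathscr R$. Writing $K_1=\ker T\oplus N_1$ and $K_1=\ker T'\oplus N_1'$ with $N_1=(\ker T)^{\perp}$ and $N_1'=(\ker T')^{\perp}$, the restrictions $T_0=T|_{N_1}:N_1\to\mathscr R$ and $T_0'=T'|_{N_1'}:N_1'\to\mathscr R$ are algebraic bijections onto $\mathscr R$. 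I then form the algebraic bijection $S=T_0^{-1}T_0':N_1'\to N_1$ and assemble $M=W\oplus S$, where $W:\ker T'\to\ker T$ is any invertible operator supplied by $\dim\ker T=\dim\ker T'$; a direct check gives $TM=T'$, which is $(5)$.

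The main obstacle is the boundedness of $S$ (and of $S^{-1}$), because $\mathscr R$ need not be closed and $T_0^{-1}$ is in general unbounded on $\mathscr R$ in the norm of $K_2$. I would resolve this with the closed graph theorem applied to $S:N_1'\to N_1$ between the Hilbert spaces $N_1'$ and $N_1$: if $x_n\to x$ and $Sx_n\to y$, then $T_0'x_n=T_0(Sx_n)\to T_0y$ while also $T_0'x_n\to T_0'x$, so $T_0y=T_0'x=T_0(Sx)$ and injectivity of $T_0$ forces $y=Sx$; thus $S$ is closed and therefore bounded, and the symmetric argument (or the bounded inverse theorem) gives boundedness of $S^{-1}$. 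This is exactly the point where operator range theory, rather than naive inversion, is needed.
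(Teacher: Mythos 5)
Your proof is correct, but it routes the hard content differently from the paper. The paper's cycle is $(1)\Rightarrow(4)\Leftrightarrow(5)\Rightarrow(3)\Rightarrow(2)\Rightarrow(1)$, and it outsources both nontrivial steps to Fillmore--Williams: in $(1)\Rightarrow(4)$ it extracts $B(ran\,T)=ran\,T'$ from the triangular matrix of the isomorphism and invokes \cite[Theorem 3.1]{FW}, and the entire equivalence $(4)\Leftrightarrow(5)$ is simply cited as \cite[Theorem 3.4]{FW}. You do two things differently. First, you prove $(1)\Rightarrow(5)$ directly by showing $N=A+BT$ is injective as well as surjective; this is precisely the shortcut the paper records in a remark (attributed to R.~Sato and Y.~Ueda) as a way to bypass \cite{FW} at this step, and your argument is sound: if $Nx=0$ then $\Phi(x,Tx)=(0,DTx)\in\graph(T')$ forces $DTx=T'0=0$, hence $\Phi(x,Tx)=0$ and injectivity of $\Phi$ gives $x=0$. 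Second, and more substantially, you prove the heart $(4)\Rightarrow(5)$ from scratch rather than citing \cite[Theorem 3.4]{FW}: after normalizing $ran\,T=ran\,T'$ by a unitary, you build $M=W\oplus S$ with $S=T_0^{-1}T_0'$ and get boundedness of $S$ (and of $S^{-1}$) from the closed graph theorem; this Douglas-factorization-style argument is correct --- the graph-closedness computation only uses continuity of $T_0,T_0'$ and injectivity of $T_0$, never closedness of the range $\mathscr{R}$ --- and it effectively reproves the portion of the Fillmore--Williams result that the paper takes as a black box. Note that neither you nor the paper escapes \cite[Theorem 3.1]{FW} (similar operator ranges are unitarily equivalent): the paper uses it inside $(1)\Rightarrow(4)$, you use it inside $(5)\Rightarrow(4)$. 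The trade-off is clear: your proof is more self-contained and exposes the analytic mechanism behind the factorization, at the cost of length; the paper's is shorter by leaning on the operator-range literature.
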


\begin{proof}

(1)$\Rightarrow$ (4):
Assume that (1) holds.
Then there exists an invertible operator $S\in B(K_{1}\oplus K_{2})$ such that
$S(K_{1}\oplus 0)=K_{1}\oplus 0$
and 
$S(graph (T))=graph (T^{\prime})$.
Hence there exist operators $A\in B(K_{1}),B\in B(K_{2}),C\in B(K_{2},K_{1})$
such that 
$S=
\begin{pmatrix}
A & C \\
0 & B
\end{pmatrix}
$.
Since $S(K_{1}\oplus 0) = K_{1}\oplus 0$,
$A$ is surjective.
If $Ax=0$ for $\in K_{1}$,
then 
$
\begin{pmatrix}
A & C \\
0 & B 
\end{pmatrix}
\begin{pmatrix}
x \\
0 
\end{pmatrix}
=
\begin{pmatrix}
0\\
0 
\end{pmatrix}$
.
Since $S$ is invertible, 
$x=0$.
Thus $A$ is injective.
Therefore $A$ is bounded invertible.
Since 
$S$ is invertible, $B$ is surjective.
Assume that 
$By=0$ for $y\in K_{2}$.
We put $x=-A^{-1}Cy$.
Then 
$$
\begin{pmatrix}
A & C \\
0 & B 
\end{pmatrix}
\begin{pmatrix}
x \\
y
\end{pmatrix}
=
\begin{pmatrix}
A & C \\
0 & B 
\end{pmatrix}
\begin{pmatrix}
-A^{-1}Cy\\
y
\end{pmatrix}
=
\begin{pmatrix}
0\\
0
\end{pmatrix}
.
$$
Hence $y=0$.
Thus $B$ is injective and $B$ is bounded invertible.
Since 
$S(graph (T)) \subset graph (T^{\prime})$,
for any $x_{1}\in K_{1}$,
we have
$$
\begin{pmatrix}
A & C \\
0 & B 
\end{pmatrix}
\begin{pmatrix}
x_{1}\\
Tx_{1}
\end{pmatrix}
=
\begin{pmatrix}
(A+CT)x_{1}\\
BTx_{1}
\end{pmatrix}
=
\begin{pmatrix}
x_{2}\\
T^{\prime}x_{2}
\end{pmatrix}
\text{for some }x_{2}\in K_{1}
.$$
So $BT=T^{\prime}(A+CT)$ .
Since 
$S(graph (T)) \supset graph (T^{\prime})$,
$A+CT$ is surjective.
Hence $B(ran T)=ran T^{\prime}.$
By Fillmore and Williams \cite[Theorem 3.1]{FW},
$ran T$ is unitarily equivalent to $ran T^{\prime}.$
We shall show that  $\dim ker T$ $=
\dim ker T^{\prime}$.
Since 
$$
(K_{1}\oplus 0)
\cap (graph(T))=\ker T \oplus  0,
$$
the bounded isomorphism $S
=\begin{pmatrix}
A & C \\
0 & B
\end{pmatrix}$ gives
$S(\ker T \oplus  0)
=\ker T^{\prime} \oplus  0$.
Hence $\dim ker T=
\dim ker T^{\prime}$.
Thus (4) holds.\\
\noindent
(4)$\Leftrightarrow$ (5): This follows from a result \cite[Theorem 3.4]{FW}.\\
\noindent
(5)$\Rightarrow$ (3):It is trivial. \\
\noindent
(3)$\Rightarrow$ (2): Assume that (3) holds.
There exist bounded 
invertible operators 
$G_{1}\in B(K_{1})$ and $G_{2}\in B(K_{2})$
such that
$T^{\prime}G_{1}=G_{2}T.$
We put 
$S=
\begin{pmatrix}
G_{1} & 0 \\
0 & G_{2}
\end{pmatrix}.
$
Then 
$S$ is an invertible map on $K_{1}\oplus K_{2}.$

$
S(graph (T))=
\{\begin{pmatrix}
G_{1} & 0 \\
0 & G_{2}
\end{pmatrix}
\begin{pmatrix}
x_{1} \\
Tx_{1}
\end{pmatrix}
\vert x_{1}\in K_{1}\}
$

$=
\{\begin{pmatrix}
G_{1}x \\
G_{2}Tx_{1}
\end{pmatrix}
\vert x_{1}\in K_{1}\}
$
$=
\{\begin{pmatrix}
G_{1}x \\
T^{\prime}G_{1}x_{1}
\end{pmatrix}
\vert x_{1}\in K_{1}\}
.$

Since $ran  G_{1}=K_1$,
we have 
$S(graph (T))=graph (T^{\prime})$.
Thus (2) holds.\\
\noindent
(2)$\Rightarrow$ (1): It is trivial.
\end{proof}

\noindent  
{\bf Remark.} 
We see that $A+CT$ is also 
one to one as pointed out to us by R. Sato and Y. Ueda. 
Therefore we can directly show that (1) implies (5) without a result 
in \cite{FW}. 

\noindent  
{\bf Remark.} The theorem above does not hold if $T_1$ or $T_2$ is not 
bounded. 
Let $T$ be a densely defined closed operator
with the domain $Dom(T)$ of $T$ 
in a Hilbert space $K$ and $H=K\oplus K$.
Assume that $Dom(T)\ne K$.
Let $U$ be a bounded operator on $K$.
Then
 derived three subspace systems $\tilde{\mathcal{S}_1}
=(H;K\oplus 0,0\oplus K,graph(T))$ and 
$\tilde{\mathcal{S}_2}=(H;K\oplus 0,0\oplus K,graph(U))$
are not boundedly isomorphic although 
$\mathcal{S}_{1}=(H;K\oplus 0,graph(T))$
and
$\mathcal{S}_{2}=(H;K\oplus 0,graph(U))$
are boundedly isomorphic to $(H;K\oplus 0,0\oplus K)$ by 
Lemma \ref{lemma:decompose}.  On the contrary, suppose that 
$\tilde{\mathcal{S}_1}$ were  boundedly isomorphic to  $\tilde{\mathcal{S}_2}$,
then
there exists 
an invertible operator $W$ 
on $H$ such that
$W(K\oplus 0)=K\oplus 0,
W(0\oplus K)=0\oplus K$.
The operator $W$ has the form
$W=
\begin{pmatrix}
A&0 \\
0&B
\end{pmatrix}
$,where $A$ and $B$ are invertible.
Since $W(graph(U))=graph(T)$,
$AK=Dom(T)$. Hence $Dom(T)=K$.
This is a contradiction.
Hence 
 $\tilde{\mathcal{S}_1}$ is not boundedly isomorphic to  $\tilde{\mathcal{S}_2}$.

\noindent  
{\bf Remark.} 
If $T$ is a normal operator,
then $ker T=ker T^*=(ran T)^{\perp}$.
Therefore,
if $T_{1}$ and $T_{2}$ are normal,
then the condition (4) is equivalent to
that
$ran T_{1}$ and $ran T_{2}$
are unitarily equivalent.


\section{Examples  of non-isomorphic two subspace systems}
At first we consider examples of two subspace systems given by 
graphs of compact operators. 

\begin{prop}
Let $A$ and $B$ be compact positive operators on a Hilbert space $K$.
We may assume that
there exist orthonormal systems $(x_{n})_n$ and 
$(y_n)_n$ in $K$
such that 
$A=
\sum_n \mu_{n}(A)x_{n}\otimes x_{n}$,
$B=
\sum_n \mu_{n}(B)y_{n}\otimes y_{n}$.
Also assume that
$\mu_{n}(A)\ne 0$ and $\mu_{n}(B)\ne 0$
for any $n\in \mathbb{N}$.
Then the following {\rm(i)} and {\rm(ii)} are equivalent:\\
\noindent
{\rm(i)}
$\dim ker A=\dim ker B$  and there exist positive numbers
$\gamma_{1},\gamma_{2}$
such that,  for any $n \in {\mathbb N}$
$$
\gamma_{1}\mu_{n}(B)
\leq \mu_{n}(A)
\leq \gamma_{2}\mu_{n}(B)
.$$
\noindent
{\rm(ii)}
$(H;K\oplus 0,graph(A))$ is boundely
isomorphic to
 $(H;K\oplus 0,graph(B))$.

\label{prop:mu}
\end{prop}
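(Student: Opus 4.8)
The plan is to reduce everything to the main theorem, Theorem~\ref{pro:3sub}, so that the only genuine work is translating the analytic condition (i) into the operator-range condition appearing in part (4) of that theorem. Since $A$ and $B$ are positive, they are in particular normal, so by the last Remark following Theorem~\ref{pro:3sub} the condition (4) collapses to: $ran A$ and $ran B$ are unitarily equivalent (the nullity condition $\dim \ker A=\dim\ker B$ being automatically part of (i)). Thus statement (ii) is, by Theorem~\ref{pro:3sub}, equivalent to the \emph{single} statement that $ran A$ and $ran B$ are unitarily equivalent together with $\dim\ker A=\dim\ker B$. So the whole proposition reduces to the claim:
\begin{equation}
ran A \text{ and } ran B \text{ unitarily equivalent} \iff \exists\,\gamma_1,\gamma_2>0:\ \gamma_1\mu_n(B)\le\mu_n(A)\le\gamma_2\mu_n(B)\ \forall n. \notag
\end{equation}

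The main work, then, is to characterize unitary equivalence of the two operator ranges $ran A$ and $ran B$ in terms of the singular-value (eigenvalue) sequences $(\mu_n(A))_n$ and $(\mu_n(B))_n$. By the Fillmore--Williams result \cite[Theorem 3.1]{FW}, unitary equivalence of operator ranges coincides with similarity; by part (5) of Theorem~\ref{pro:3sub} this is equivalent to the existence of invertible $L,M$ with $A=LBM$. For positive diagonalizable operators with the stated spectral decompositions, I would exploit that $ran A$ depends only on the rates of decay of the $\mu_n(A)$: concretely, $ran A=\{\sum_n c_n x_n : \sum_n |c_n|^2/\mu_n(A)^2<\infty\}$ (interpreting the zero eigenvalues correctly, so that the $\ker$ directions are excluded), and similarly for $B$. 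The two-sided comparison $\gamma_1\mu_n(B)\le\mu_n(A)\le\gamma_2\mu_n(B)$ is exactly the statement that these weighted $\ell^2$ conditions define the same set after the obvious identification of orthonormal systems, i.e.\ it is equivalent to $ran A=V(ran B)$ for a suitable unitary $V$ matching $(y_n)$ to $(x_n)$ (and matching the kernels, using $\dim\ker A=\dim\ker B$).

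For the forward direction (i)$\Rightarrow$(ii), given the comparison inequalities I would build the invertible operator $M$ directly as a diagonal operator (in the appropriate bases) whose entries are the ratios $\mu_n(A)/\mu_n(B)$; the two-sided bound guarantees both $M$ and $M^{-1}$ are bounded, and then a unitary $V$ carrying $(y_n)$ to $(x_n)$ and $\ker B$ to $\ker A$ yields $A=V B (\text{diagonal})$, giving the factorization required by (5), hence (ii). For the reverse direction (ii)$\Rightarrow$(i), unitary equivalence of the ranges gives $A=LBM$ with $L,M$ invertible; I would apply the min-max/singular-value inequalities recorded in the excerpt, namely $\mu_n(XT)\le\|X\|\mu_n(T)$ and $\mu_n(TX)\le\|X\|\mu_n(T)$, to the factorizations $A=LBM$ and $B=L^{-1}AM^{-1}$, obtaining $\mu_n(A)\le\|L\|\,\|M\|\,\mu_n(B)$ and $\mu_n(B)\le\|L^{-1}\|\,\|M^{-1}\|\,\mu_n(A)$, which is precisely the desired two-sided comparison with $\gamma_2=\|L\|\|M\|$ and $\gamma_1=(\|L^{-1}\|\|M^{-1}\|)^{-1}$.

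The step I expect to be the main obstacle is the precise bookkeeping of the kernels and the zero eigenvalues in identifying $ran A$ as a weighted $\ell^2$ space: one must be careful that the operator-range description is insensitive to the kernel directions, so that the unitary equivalence of ranges genuinely forces the comparison of \emph{nonzero} eigenvalues $\mu_n$ (which is why the hypothesis $\mu_n(A),\mu_n(B)\ne0$ and the separate matching of $\dim\ker A=\dim\ker B$ are both needed). The reverse direction via the $\mu_n$-inequalities is the cleanest part and should go through with essentially no extra analysis beyond the facts already quoted in the excerpt.
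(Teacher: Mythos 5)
Your proposal is, in substance, the paper's own proof: for (i)$\Rightarrow$(ii) you build a unitary $V$ carrying $(y_n)$ to $(x_n)$ and $\ker B$ to $\ker A$, together with a diagonal operator of the ratios $\mu_n(A)/\mu_n(B)$ (bounded and boundedly invertible by the two-sided bound), obtaining a factorization $A=VBM$, i.e.\ condition (5) of Theorem~\ref{pro:3sub}; for (ii)$\Rightarrow$(i) you pass through (5) and apply $\mu_n(XT)\le\|X\|\mu_n(T)$ and $\mu_n(TX)\le\|X\|\mu_n(T)$ to $A=LBM$ and $B=L^{-1}AM^{-1}$. This is exactly what the paper does, the only cosmetic difference being that the paper deduces $\dim\ker A=\dim\ker B$ from $\ker A=M^{-1}(\ker B)$ while you read it off from condition (4).

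One misstatement should be corrected, though it does not affect the argument you actually carry out. Your displayed ``reduction'' --- $\mathrm{ran}\,A$ and $\mathrm{ran}\,B$ unitarily equivalent $\iff$ the two-sided comparison --- is false in the $\Leftarrow$ direction once the kernel condition is dropped. For example, on $K=\ell^2(\mathbb{N})$ let $A$ be injective with eigenvalues $(1/n)_n$, and let $B$ have the same eigenvalue sequence but eigenvectors spanning a proper closed subspace, so that $\ker B$ is infinite dimensional: the comparison holds with $\gamma_1=\gamma_2=1$, yet $\mathrm{ran}\,A$ is dense in $K$ while $\mathrm{ran}\,B$ is not, so no unitary can carry one range onto the other. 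The correct reduction keeps $\dim\ker A=\dim\ker B$ on the comparison side (on the operator-range side it is automatic for positive operators, since $\ker$ equals the orthocomplement of the range and unitaries preserve orthocomplements of closures). Since your forward construction explicitly uses the kernel matching --- as you yourself note in the final paragraph --- this is a flaw in the stated reduction, not a gap in the proof.
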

\begin{proof}

(i)$\Rightarrow$ (ii):Assume (i). 
Since $\dim ker A=\dim ker B$,
there exists a unitary operator $U$ such that 
$Uy_{n}=x_{n}$
for $n\in \mathbb{N}$.
Then we fave that 
$$
UBU^*=
\sum_n \mu_{n}(B)Uy_{n}\otimes Uy_{n}
= \sum_n \mu_{n}(B)x_{n}\otimes x_{n}.
$$
The positive sequence 
$(\nu_{n})_n:= (\mu_{n}(A)/\mu_{n}(B))_n $ is bounded and bounded below 
by the assumption (i).  
Take an orthonormal basis   
$\{z_{n}\}$ of $ker A$ and 
define  a bounded invertible diagonal operator $Z:= \sum\nu_{n}x_{n}
\otimes x_{n}+
 \sum z_{n}
\otimes z_{n}
$.
Then we have
$$
UBU^*Z=
A=
\sum\mu_{n}(A)x_{n}\otimes x_{n}
.$$
By Theorem \ref{pro:3sub}, we have (ii).  

(ii)$\Rightarrow$ (i):
Assume (ii). 
Then there exist bounded invertible operators $C$ and $D$ such that
$A=CBD$ by Theorem \ref{pro:3sub}. 
Hence 
$$\mu_{n}(A)=\mu_{n}(CBD)\leq \vert\vert C\vert\vert
\vert\vert D\vert\vert \mu_{n}(B).$$
and
$$\mu_{n}(B)=\mu_{n}(C^{-1}AD^{-1})\leq \vert\vert C^{-1}\vert\vert
\vert\vert D^{-1}\vert\vert \mu_{n}(A).$$
Put
$\gamma_{1}=\vert\vert C^{-1}\vert\vert^{-1}
\vert\vert D^{-1}\vert\vert^{-1}
$ and
$\gamma_{2}=\vert\vert C\vert\vert
\vert\vert D\vert\vert
$.
Then we have 
$$
\gamma_{1}\mu_{n}(B)
\leq \mu_{n}(A)
\leq \gamma_{2}\mu_{n}(B)
.$$
Since $A=CBD$,
$ker A=D^{-1}(ker B)$.
Hence 
$\dim kerA= \dim ker B$.

Thus (i) holds.

\end{proof}

We study 
 two subspace systems given by graphs of Schatten class operators.
We shall consider an invariant for such two subspace systems.

Let $T$ be a Schatten class operator on a Hilbert space $K$.
We put
$$Sh(T):=inf\{\alpha>0:T\in C^{\alpha}(K)\}.$$

For example, 
let $T$ be the diagonal operator with diagnals $(1/n^s)_n$ for $s > 0$, 
then $Sh(T) = 1/s$.

\begin{prop}
Let $T_{1},T_{2}$ be 
Schatten class operators on a Hilbert space $K$.
If
$(H;K\oplus 0,graph (T_{1}))$
is boundedly isomorphic to 
$(H;K\oplus 0,graph (T_{2}))$, 
then 
$Sh(T_{1})=Sh(T_{2})$.
But the converse does not hold.
\label{prop:Sh}
\end{prop}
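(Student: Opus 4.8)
The plan is to prove the two claims separately. For the forward implication, I would invoke the main theorem (Theorem \ref{pro:3sub}): a bounded isomorphism of the two subspace systems $(H;K\oplus 0,\graph(T_1))$ and $(H;K\oplus 0,\graph(T_2))$ yields, by the equivalence (1)$\Leftrightarrow$(5), invertible operators $L,M\in B(K)$ with $T_1=LT_2M$. The strategy is then to show that the quantity $Sh(T)$ is invariant under multiplication by bounded invertible operators on either side. This is where the singular-value inequalities recalled in the excerpt do the work: from $\mu_n(XT)\le\|X\|\mu_n(T)$ and $\mu_n(TX)\le\|X\|\mu_n(T)$ one gets, exactly as in the proof of Proposition \ref{prop:mu}, a two-sided comparison
\[
\gamma_1\mu_n(T_2)\le\mu_n(T_1)\le\gamma_2\mu_n(T_2)
\]
with $\gamma_1=\|L^{-1}\|^{-1}\|M^{-1}\|^{-1}$ and $\gamma_2=\|L\|\,\|M\|$.

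From the two-sided comparison the equality $Sh(T_1)=Sh(T_2)$ follows by a direct analysis of convergence of $\sum_n\mu_n(T)^\alpha$. Concretely, $\mu_n(T_1)^\alpha\le\gamma_2^\alpha\mu_n(T_2)^\alpha$ shows that convergence of $\sum\mu_n(T_2)^\alpha$ forces convergence of $\sum\mu_n(T_1)^\alpha$, so $T_2\in C^\alpha(K)$ implies $T_1\in C^\alpha(K)$; hence $Sh(T_1)\le Sh(T_2)$. The lower inequality $\gamma_1\mu_n(T_2)\le\mu_n(T_1)$ gives the reverse containment and thus $Sh(T_2)\le Sh(T_1)$. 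I expect no real obstacle here: the comparison is uniform in $n$ and the multiplicative constants $\gamma_i^\alpha$ do not affect which series converge, so the infimum defining $Sh$ is unchanged.

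The genuinely substantive part is exhibiting a counterexample to the converse, and this is the step I expect to be the main obstacle. I would construct two positive diagonal compact operators $T_1,T_2$ with the \emph{same} $Sh$-value but whose systems are \emph{not} boundedly isomorphic. By Proposition \ref{prop:mu}, for positive operators with trivial kernel (or with equal nullity) bounded isomorphism is equivalent to the two-sided comparability $\gamma_1\mu_n(T_2)\le\mu_n(T_1)\le\gamma_2\mu_n(T_2)$. So the task reduces to producing two sequences of singular values that have the same critical exponent $Sh$ yet are not comparable up to constants. A natural choice is $\mu_n(T_1)=1/n$ and $\mu_n(T_2)=1/(n\log n)$ (for $n\ge 2$), or more simply $\mu_n(T_1)=1/n$ against a sequence like $1/n$ perturbed on a sparse subsequence; both sequences give $Sh=1$ since $\sum n^{-\alpha}$ and $\sum (n\log n)^{-\alpha}$ converge for exactly the same range $\alpha>1$, while $\mu_n(T_1)/\mu_n(T_2)=\log n\to\infty$ shows the ratio is unbounded, so no $\gamma_2$ can work.

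Finally I would tie the counterexample back to the systems: taking $K=\ell^2(\mathbb{N})$, $H=K\oplus K$, and letting $T_1,T_2$ be the diagonal operators with the chosen singular values (both injective, so $\dim\ker T_1=\dim\ker T_2=0$), the failure of comparability together with the equivalence (i)$\Leftrightarrow$(ii) of Proposition \ref{prop:mu} shows that $(H;K\oplus 0,\graph(T_1))$ and $(H;K\oplus 0,\graph(T_2))$ are not boundedly isomorphic, even though $Sh(T_1)=Sh(T_2)=1$. The only delicate point is to verify the equal-$Sh$ claim for the chosen sequences, which is a routine comparison of the integrals $\int x^{-\alpha}\,dx$ and $\int (x\log x)^{-\alpha}\,dx$, and to confirm the ratio diverges, which is immediate.
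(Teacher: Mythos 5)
Your proposal is correct, and its counterexample half is essentially the paper's own: the paper also takes diagonal operators with singular values $(1/n)_n$ and $(1/((n+1)\log(n+1)))_n$ and rules out bounded isomorphism via Proposition \ref{prop:mu}, exactly as you do with $(1/(n\log n))_{n\ge 2}$. Where you genuinely differ is the forward implication. Both arguments start from Theorem \ref{pro:3sub}(5), i.e.\ $T_{1}=LT_{2}M$ with $L,M$ invertible, but the paper then proves that $C^{\alpha}(K)$ is a two-sided ideal in $B(K)$ for \emph{every} $\alpha>0$: the nonstandard case $0<\alpha<1$ is handled with the Heinz inequality, giving
\[
\vert LT\vert^{\alpha}=(T^{*}L^{*}LT)^{\alpha/2}\leq \Vert L\Vert^{\alpha}\vert T\vert^{\alpha},
\]
together with $Tr(\vert T\vert^{\alpha})=Tr(\vert T^{*}\vert^{\alpha})$ and MacCarthy's result that $C^{\alpha}(K)$ is a linear space; the equality $Sh(T_{1})=Sh(T_{2})$ then follows because $T_{1}\in C^{\alpha}(K)$ iff $T_{2}\in C^{\alpha}(K)$. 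You instead apply the singular-value estimates $\mu_{n}(XT)\leq\Vert X\Vert\mu_{n}(T)$ and $\mu_{n}(TX)\leq\Vert X\Vert\mu_{n}(T)$ (recalled in Section 2) to $T_{1}=LT_{2}M$ and $T_{2}=L^{-1}T_{1}M^{-1}$, obtaining the two-sided comparison $\gamma_{1}\mu_{n}(T_{2})\leq\mu_{n}(T_{1})\leq\gamma_{2}\mu_{n}(T_{2})$, and then compare the series $\sum_{n}\mu_{n}(\cdot)^{\alpha}$ directly; since $Tr(\vert T\vert^{\alpha})=\sum_{n}\mu_{n}(T)^{\alpha}$, membership in $C^{\alpha}(K)$ is literally an $\ell^{\alpha}$ condition on singular values, so this is complete. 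Your route is more elementary, bypassing the Heinz inequality and the linearity/ideal issues for $0<\alpha<1$, and it yields slightly more: two-sided comparability of singular values (the exact invariant of Proposition \ref{prop:mu}, here extended to not-necessarily-positive compact operators), which is strictly stronger than equality of $Sh$. The paper's route, in exchange, records the ideal property of $C^{\alpha}(K)$ for all $\alpha>0$, a fact of independent interest.
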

\begin{proof}
For $1\leq \alpha$,
it is known that $C^{\alpha}(K)$ 
is a  ideal in $B(K)$.
For $0<\alpha<1,$
it seems that this fact is not well known. 
So we shall give a proof for completeness.
Heinz-inequality says that
for positive operators $A, B\in B(K)$ and $0\leq p\leq 1$, if $A\leq B$, then 
$A^{p}\leq B^{p}$. 
Using this inequality, we have
$$
\vert BT\vert^{\alpha}
=(\vert BT\vert^2)^{{\alpha}/2}
=(T^*B^*BT)^{{\alpha}/2}
\leq(T^*\vert\vert B
\vert\vert^2T)^{{\alpha}/2}\\
=\vert\vert B
\vert\vert^{\alpha}(\vert T\vert^{{\alpha}}).
$$
Therefore 
if $T\in C^{\alpha}(K)$,then
 $BT\in C^{\alpha}(K)$ for $\alpha>0.$
If $Tr(\vert T\vert^{\alpha})<\infty$
for $0< \alpha$, 
then
$$Tr(\vert T\vert^{\alpha})=
Tr(\vert T^*\vert^{\alpha}).$$
From this,
if $T\in C^{\alpha}(K)$,then
 $TB\in C^{\alpha}(K)$ for $\alpha>0.$
Therefore $ C^{\alpha}(K)$ is an ideal in $B(K)$,  since 
it is known that for $0< \alpha<1$, $ C^{\alpha}(K)$ is a linear space 
as in ([M],Theorem 2.8]), for example.

Suppose that 
 $(H;K\oplus 0,graph (T_{1}))$
is boundedly isomorphic to 
$(H;K\oplus 0,$ $graph (T_{2}))$. 
By  Theorem \ref{pro:3sub},
there exist bounded invertible operators $L,M\in B(K)$ such that 
$T_{1}=LT_{2}M.$ Since $ C^{\alpha}(K)$ is an ideal of $B(K)$ for any 
$\alpha$, $T_1 \in C^{\alpha}(K)$ if and only if $T_2 \in C^{\alpha}(K)$. 
Therefore $Sh(T_{1})=Sh(T_{2})$. 

But the converse does not hold. In fact, let $T_{1}$ be the diagonal operator
with diagonals $(1/n)_n$
and $T_{2}$
the diagonal operator
with
diagonals $(1/\{(n+1)\log(n+1)\})_n$.
Then $Sh(T_1)=Sh(T_2)$
and
two subspace systems 
$(H;K\oplus 0,graph (T_{1}))$ and
$(H;K\oplus 0,graph (T_{2}))$ are not
boundedly isomorphic
by Proposition \ref{prop:mu}.
\end{proof}

\noindent
{\bf{Example.}} Let K be a Hilbert space 
with a basis $(e_n)_n$.  
Let 
$A=\Sigma_n 1/n(e_{n+1}\otimes e_n)$
and
$B=\Sigma_n 1/n(e_n\otimes e_n)$.
Then $A$ and $B$ are Schatten class operators.
Clearly $Sh(A)=Sh(B)=1$.
But the two subspace systems given by the graphs of $A$ and $B$ 
are not isomorphic, 
since $ran A$ is not unitarily equivalent to $ran B$.

We note that 
Schatten class operators do not exhaust all compact operators.
Consider a diagonal operator $T$  with
 diagonals $(1/\log(n+1))_{n}$. Then $T$ is a compact operator but $T$ does not belong to any Schatten class operator.
The next proposition can be applied for such an operator.

\begin{prop}
Let $s,t$ be positive numbers and $s\ne t$.
Let
$c_{n}$ be a decreasing sequence of positive numbers
with $\lim_{n\to \infty} c_{n}=0$. 
Put $K=\ell^2(\mathbb{N})$ and $H=K\oplus K$.
Let 
$A$(resp.$B$) be a diagonal operator on $K$ with diagonals 
$(c_{n}^s)_{n}$
{\rm (}resp. 
$(c_{n}^t)_{n}${\rm )}.
Then $(H;K\oplus 0,graph(A))$ is 
not boundedly isomorphic to
 $(H;K\oplus 0,graph(B))$.

\label{prop:c(n)}
\end{prop}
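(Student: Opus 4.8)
The plan is to reduce the whole statement to Proposition \ref{prop:mu}, since $A$ and $B$ are both compact positive operators. First I would record the basic structure. Because $c_n > 0$ and $c_n \to 0$, the diagonal operators $A$ and $B$ are compact, positive, and injective, so $\dim \ker A = \dim \ker B = 0$. Moreover, since $(c_n)_n$ is decreasing, so are $(c_n^s)_n$ and $(c_n^t)_n$, whence the diagonal entries are already listed in decreasing order with respect to the standard basis $(e_n)_n$. This identifies the singular values exactly: $\mu_n(A) = c_n^s$ and $\mu_n(B) = c_n^t$ for every $n$, with all of them nonzero, so the hypotheses of Proposition \ref{prop:mu} (positivity, a diagonalizing orthonormal system, and $\mu_n \ne 0$) are met.

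With this identification in hand, Proposition \ref{prop:mu} states that $(H; K \oplus 0, \graph(A))$ and $(H; K \oplus 0, \graph(B))$ are boundedly isomorphic if and only if $\dim \ker A = \dim \ker B$ (already true, both being $0$) and there exist positive constants $\gamma_1, \gamma_2$ with
$$
\gamma_1 c_n^t \le c_n^s \le \gamma_2 c_n^t \qquad (n \in \mathbb{N}).
$$
Dividing through by $c_n^t > 0$, this is equivalent to the two-sided estimate $\gamma_1 \le c_n^{\,s-t} \le \gamma_2$ for all $n$; in other words, bounded isomorphism would force the sequence $(c_n^{\,s-t})_n$ to be bounded above and simultaneously bounded away from $0$.

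To finish I would derive a contradiction from the hypothesis $s \ne t$. Without loss of generality assume $s > t$, so that $s - t > 0$ (the case $s < t$ is symmetric, obtained by interchanging the roles of $A$ and $B$). Since $0 < c_n \to 0$, we have $c_n^{\,s-t} = \exp\bigl((s-t)\log c_n\bigr) \to 0$ as $n \to \infty$, so no positive lower bound $\gamma_1$ can persist for all $n$. Hence the comparison condition of Proposition \ref{prop:mu} fails, and the two systems are not boundedly isomorphic, as claimed.

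The computations are entirely routine; the one point that deserves care, and which I regard as the main (if modest) obstacle, is the clean identification of $\mu_n(A)$ and $\mu_n(B)$ with $c_n^s$ and $c_n^t$. This is precisely where the monotonicity hypothesis on $(c_n)_n$ is used: it guarantees that the diagonal entries are already the ordered singular values, so no rearrangement is needed before invoking Proposition \ref{prop:mu}, and the reduction to the asymptotics of the ratio $c_n^{\,s-t}$ is immediate.
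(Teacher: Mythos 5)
Your proof is correct and follows essentially the same route as the paper: invoke Proposition \ref{prop:mu} to extract the two-sided estimate $\gamma_1 c_n^t \le c_n^s \le \gamma_2 c_n^t$, then contradict it using $c_n \to 0$ and $s \ne t$. Your extra care in verifying the hypotheses of Proposition \ref{prop:mu} (injectivity, and that monotonicity makes the diagonal entries the ordered singular values $\mu_n(A)=c_n^s$, $\mu_n(B)=c_n^t$) is a point the paper leaves implicit, but the argument is the same.
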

\begin{proof}
On the contrary, suppose that 
$(H;K\oplus 0,graph(A))$ were boundedly isomorphic to
 $(H;K\oplus 0,graph(B)). $ 
Then by Proposition \ref{prop:mu},
there exist positive numbers
$\gamma_{1},\gamma_{2}$
such that
$$
\gamma_{1}\mu_{n}(B)
\leq \mu_{n}(A)
\leq \gamma_{2}\mu_{n}(B)
.$$

Hence
$$
\gamma_{1}c_{n}^t
\leq c_{n}^s
\leq \gamma_{2}c_{n}^t
.$$

If $s>t$,by 
$\gamma_{1}
\leq c_{n}^{(s-t)},$
this is a contradiction.
If $s<t$,by 
 $c_{n}^{(s-t)}
\leq \gamma_{2}
,$
this is a contradiction.
This proves the  theorem. .

\end{proof}


\noindent
{\bf Example.} Let $K=L^{2}[0,1]$ and $s,t>0$.
Let 
$M_{x^s}$ be the multiplication operator 
on $K$ such that
$M_{x^s}f(x)=x^sf(x)$ for $f\in K$ and $x\in [0,1]$.
Then $ran M_{x^s}$ is unitarily equivalent to $ran M_{x^{t}}$.

In fact
we put
$U:K\to K$ by
$$(Uf)(x)=\sqrt{sx^{s-1}}f(x^{s})$$
for
$f\in K$. Then
$U$ is a unitary and
$UM_{x}=M_{x^s}U$.
Thus
$ran M_{x}$
and
$ran M_{x^s}$
are
unitarily equivalent.


Next we shall consider when two subspaces are 
algebraically isomorphic. The Hamel dimension of any infinite dimensional
separable Banach space is continuously infinite (cf.[L]). 
The Hamel dimension of an operator range in a separable Hilbert
space $K$ is finite or continuous, since, for $A\in B(K)$, 
$K/ker A$ is algeraically isomorphic to $ran A$.
For any non-closed operator range $\mathscr{R}$ in a separable 
Hilbert space $K$,
the Hamel dimension of $K/\mathscr{R} $ is continuous.
See, for example, \cite[page.274,Cor1]{FW}.
Let  $c_{0}$ be the vector space 
of sequences which converges to 0 and
let $c_{00}$ be the subspace of sequences with a finite support.
Clearly the Hamel dimension of $c_{oo}$ is countable. 
Thus $c_{oo}$ can not be an operator range  in $\ell^2(\mathbb N)$.

It is easy to see the following:

\begin{prop}Let $H$ and $L$ be Hilbert spaces. Then the following are 
equivalent. \\
{\rm(1)}
Two subspace systems
 $(H;E_{1},E_{2})$
and
$(L;F_{1},F_{2})$
are algebraically isomorphic. \\
{\rm(2)}
$${\rm Hdim}(E_{1}\cap E_{2}) ={\rm Hdim}(F_{1}\cap F_{2}),$$
$${\rm Hdim}(E_1/(E_{1}\cap E_{2})) = 
{\rm Hdim}(F_1/(F_{1}\cap F_{2})),$$  
$${\rm Hdim}(E_2/(E_{1}\cap E_{2})) 
= {\rm Hdim}(F_2/(F_{1}\cap F_{2}))$$ 
and 
$${\rm Hdim}(H/(E_{1} + E_{2})) = 
{\rm Hdim}(L/(F_{1} +  F_{2})). $$
\end{prop}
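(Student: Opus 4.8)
The plan is to treat the two implications separately, with essentially all of the content residing in $(2)\Rightarrow(1)$. The implication $(1)\Rightarrow(2)$ is immediate: if $V:H\to L$ is an invertible linear map with $V(E_{1})=F_{1}$ and $V(E_{2})=F_{2}$, then $V$ restricts to linear bijections $E_{1}\cap E_{2}\to F_{1}\cap F_{2}$ and $E_{1}+E_{2}\to F_{1}+F_{2}$ and carries each $E_{i}$ onto $F_{i}$, so it induces linear isomorphisms on each of the four quotient spaces appearing in $(2)$. Since Hamel dimension is a linear-isomorphism invariant, the four equalities follow at once.

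For $(2)\Rightarrow(1)$ I would first record a canonical decomposition attached to any pair of subspaces. Using that every subspace of a vector space admits a complement, choose a complement $U_{1}$ of $E_{1}\cap E_{2}$ inside $E_{1}$, a complement $U_{2}$ of $E_{1}\cap E_{2}$ inside $E_{2}$, and a complement $W$ of $E_{1}+E_{2}$ inside $H$. A short argument, observing that any vector of $U_{2}$ lying in $E_{1}$ must lie in $E_{1}\cap E_{2}$ and hence vanish, shows that the sum
$$H=(E_{1}\cap E_{2})\oplus U_{1}\oplus U_{2}\oplus W$$
is direct, with $E_{1}=(E_{1}\cap E_{2})\oplus U_{1}$, $E_{2}=(E_{1}\cap E_{2})\oplus U_{2}$, and $E_{1}+E_{2}=(E_{1}\cap E_{2})\oplus U_{1}\oplus U_{2}$. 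By the isomorphism theorems one then has ${\rm Hdim}(U_{1})={\rm Hdim}(E_{1}/(E_{1}\cap E_{2}))$, ${\rm Hdim}(U_{2})={\rm Hdim}(E_{2}/(E_{1}\cap E_{2}))$, and ${\rm Hdim}(W)={\rm Hdim}(H/(E_{1}+E_{2}))$.

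Performing the same construction for $(L;F_{1},F_{2})$ produces $L=(F_{1}\cap F_{2})\oplus U_{1}'\oplus U_{2}'\oplus W'$ with the analogous identifications. By hypothesis $(2)$ the four corresponding summands have equal Hamel dimensions, so I can pick bases of all eight summands and fix cardinality-preserving bijections between the basis of $E_{1}\cap E_{2}$ and that of $F_{1}\cap F_{2}$, between those of $U_{1}$ and $U_{1}'$, of $U_{2}$ and $U_{2}'$, and of $W$ and $W'$. Defining $V:H\to L$ on basis vectors by these bijections and extending linearly produces an invertible linear map sending $E_{1}\cap E_{2}$, $U_{1}$, $U_{2}$, $W$ to $F_{1}\cap F_{2}$, $U_{1}'$, $U_{2}'$, $W'$ respectively; in particular $V(E_{1})=F_{1}$ and $V(E_{2})=F_{2}$, which is the required algebraic isomorphism. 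The main obstacle is precisely the directness of the decomposition and the resulting identification of the four summands' dimensions with the invariants of $(2)$; once that is in hand, the construction of $V$ is pure cardinal bookkeeping. I would also stress that the entire argument is purely algebraic, resting only on the existence of complements and the matching of (possibly infinite) cardinals via the axiom of choice, so that closedness of the subspaces, and hence any topological input, plays no role whatsoever.
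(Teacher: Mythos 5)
Your proof is correct. Note that the paper offers no proof of this proposition at all---it is introduced with ``It is easy to see the following:'' (and the same equivalence is asserted without argument in the introduction)---so your write-up supplies exactly the standard argument the authors presumably had in mind: the only nontrivial point is the directness of the decomposition $H=(E_{1}\cap E_{2})\oplus U_{1}\oplus U_{2}\oplus W$, which you verify correctly, and after identifying the summands with the four quotient invariants, the isomorphism is built by matching bases, with the axiom of choice furnishing complements and basis bijections.
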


The following proposition is a direct consequence of 
the proposition above.  

\begin{prop}
Consider $(a_{n})_{n}\in \ell^{\infty}(\mathbb{N})$
and   $(b_{n})_{n}\in \ell^{\infty}(\mathbb{N})$
such that $a_{n}\ne0$ and $b_{n}\ne0$ for any 
$n\in \mathbb{N}$.
Let  $A$ and $B$  be
diagonal operators on $K=\ell^2(\mathbb{N})$ with  diagonals 
 $(a_{n})_n$ and $(b_{n})_{n}$ respectively.
Put $H=K\oplus K.$ Then the following hold. \\
{\rm (i)} If $ran A$ is closed and $ ran B$
is not closed, then two subspace systems 
$(H;K\oplus 0, graph(A))$ 
and 
$(H;K\oplus 0, graph(B))$ 
are not algebraically isomorphic.\\
{\rm (ii)} If $ran A$ and $ ran B$
are both closed or both non-closed, then two subspace systems 
$(H;K\oplus 0, graph(A))$ 
and 
$(H;K\oplus 0, graph(B))$ 
are algebraically isomorphic.
\end{prop}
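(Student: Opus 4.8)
The plan is to reduce everything to the cardinality criterion furnished by the preceding proposition and then to compute, for each of the two systems, the four Hamel dimensions that appear there. Write $E_{1}=K\oplus 0$ and $E_{2}=graph(A)$, and similarly $F_{1}=K\oplus 0$, $F_{2}=graph(B)$. Since $A$ is diagonal with every diagonal entry nonzero, $A$ is injective, so $ker A=0$; the same holds for $B$. First I would record the resulting identifications: $E_{1}\cap E_{2}=0$ because $(x,Ax)\in K\oplus 0$ forces $Ax=0$; the map $x\mapsto(x,Ax)$ identifies $E_{2}$ with $K$, while $E_{1}$ is literally $K$; and $E_{1}+E_{2}=K\oplus ran A$, so that $H/(E_{1}+E_{2})$ is algebraically isomorphic to $K/ran A$. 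The identical computation applies to $B$.

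Consequently the first three Hamel dimensions in the criterion are the same for both systems, namely ${\rm Hdim}(E_{1}\cap E_{2})=0$ and ${\rm Hdim}(E_{i}/(E_{1}\cap E_{2}))={\rm Hdim}(K)$ for $i=1,2$, and likewise for the $F_{i}$. Thus, by the preceding proposition, the two systems are algebraically isomorphic if and only if ${\rm Hdim}(K/ran A)={\rm Hdim}(K/ran B)$. So the whole problem collapses to deciding this single quantity as a function of whether the range is closed.

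The key observation is that for a diagonal operator with nonzero entries, $ran A$ is closed if and only if $\inf_{n}|a_{n}|>0$, which is equivalent to $A$ being invertible and hence to $ran A=K$; in that case ${\rm Hdim}(K/ran A)=0$. When $ran A$ is not closed, the cited corollary of Fillmore-Williams recalled just above (that the Hamel dimension of $K/\mathscr{R}$ is continuous for any non-closed operator range $\mathscr{R}$ in a separable Hilbert space) gives that ${\rm Hdim}(K/ran A)$ equals the continuum.

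Putting these together proves both parts. For (i), if $ran A$ is closed and $ran B$ is not, then ${\rm Hdim}(K/ran A)=0$ while ${\rm Hdim}(K/ran B)$ is the continuum, the two disagree, and the systems are not algebraically isomorphic. For (ii), if both ranges are closed the common value is $0$, and if both are non-closed the common value is the continuum; either way all four Hamel dimensions coincide, so the systems are algebraically isomorphic. The only point requiring any care is the dichotomy for ${\rm Hdim}(K/ran A)$ --- specifically that closed range of an injective diagonal operator forces surjectivity (via the open mapping theorem) and that the non-closed case yields continuum codimension --- and both of these are immediate from facts already assembled in the excerpt.
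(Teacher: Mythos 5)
Your proposal is correct and takes essentially the same route as the paper's own proof: both reduce to the Hamel-dimension criterion of the preceding proposition, observe that the intersections vanish and the summand dimensions agree, and settle the remaining quotient $K/ran\, A$ via the dichotomy that a closed range of an injective diagonal operator with nonzero entries must equal $K$ (quotient dimension $0$), while a non-closed operator range has continuum codimension by the cited Fillmore--Williams corollary.
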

\begin{proof}
(i) We assume that $ran A$ is closed
 and $ran B$ 
is non-closed.
Then we have that  $ran A=K$ and
$$
(K\oplus K)/(K\oplus ranA)=
(K\oplus K)/(K\oplus K)=0, 
$$ 
but 
$(K\oplus K)/(K\oplus ranB)\ne 0$.
So these two subspace systems are not algebraically isomorphic.\\
(ii)Consider the case that 
the operator ranges $ranA$ and $ran B$ 
are non-closed. Then 
$(K\oplus 0)\cap(graph A)=ker A \oplus 0 =0$
and
$(K\oplus 0)\cap(graph B)=ker B \oplus 0=0$.
And
$K\oplus 0$,$graph A$
and $graph B$
are all algebraically  isomorphic to $K$.
Moreover the  
Hamel dimensions of 
$(K\oplus K)/(K\oplus ranA) = K/ran A$ and 
$(K\oplus K)/(K\oplus ranB) = K/ran B$ are 
both continuous, because 
$ranA$ and $ran B$ are non-closed. 
Hence 
the two subspace systems 
$(H;K\oplus 0, graph(A))$ 
and 
$(H;K\oplus 0, graph(B))$ 
are algebraically isomorphic. 
Next consider the case that the operator ranges $ranA$ and $ran B$ 
are closed. Then $ranA = K = ran B$. Similar consideration implies the 
conclusion. 
\end{proof}
For example, let $a_n = 1/n$ and $b_n = 1/n^2$. Then 
two subspace systems 
$(H;K\oplus 0, graph(A))$ 
and 
$(H;K\oplus 0, graph(B))$ 
are algebraically isomorphic,
but not boundedly isomorphic. 

\noindent
{\bf Example.} Let 
$A$ be the diagonal operator with diagonals $(n^2)_{n}$
and $A^{\prime}$ be the diagonal operator with diagonals $(2)_{n}$
on $K=\ell^2(\mathbb{N})$.
We put $H=K\oplus K$.
Then
$(H;K\oplus 0,graph(A))$
and
$(H;K\oplus 0,graph (A^{\prime}))$
are boundedly isomorphic.
In fact this follows from lemma \ref{lemma:decompose}, 
since  $K\oplus 0 + graph (A)=K\oplus K$, 
$
K\oplus 0 + graph (A ^{\prime}) 
=K\oplus K. \\
(K\oplus 0)\cap graph (A)=0$ and 
$(K\oplus 0)\cap graph (A ^{\prime}) =0$. 

\noindent
{\bf Example.} Let 
$A$(resp. $A^{\prime}$, $C$ )be the diagonal operator 
with diagonals $(n^2)_n$, 
(resp. $(2)_{n}$, $(1/n^2)_{n}$)
on $K=\ell^2(\mathbb{N})$.
We put $H=K\oplus K$. 
Then
$(H\oplus H;H\oplus 0,graph (A\oplus C))$
and
$(H\oplus H;H\oplus  0,graph (A^{\prime}\oplus C))$
are boundedly isomorphic. In fact $(H;K\oplus 0,graph (A))$
and
$(H;K\oplus  0,graph (A^{\prime}))$
are boundedely isomorphic,
and $(H\oplus H;H\oplus  0,graph (A\oplus C))$
is boundedly isomorphic to 
$(H;K\oplus 0,graph(A))\oplus (H;K\oplus 0,graph(C))$.

We give a condition 
when two subspace systems
given by  graphs of  unbounded operators
are boundedly isomorphic.

\begin{prop} 
Let $T_{1},T_{2}$ be densely defined closed operators 
on a Hilbert space $K$ such that $T_{1}^{-1},T_{2}^{-1}$
are bounded operators.
If $\vert\vert T_{1}^{-1}-T_{2}^{-1}\vert\vert <1$, 
then two subspace systems 
$(H;K\oplus 0,graph(T_{1}))$
and
$(H;K\oplus 0,graph(T_{2}))$
are boundedly isomorphic.

\label{prop:inverse}
\end{prop}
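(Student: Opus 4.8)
The plan is to remove the unboundedness by passing to the (bounded, everywhere defined) inverses. Set $S_1=T_1^{-1}$ and $S_2=T_2^{-1}$; by hypothesis $S_1,S_2\in B(K)$, so each $T_i$ is a bijection of $\operatorname{Dom}(T_i)$ onto $K$. The key first step is to re-express the graphs in terms of the $S_i$: since every element of $\graph(T_i)$ has the form $(x,T_ix)$ with $x\in\operatorname{Dom}(T_i)=\operatorname{ran}S_i$, the substitution $y=T_ix$, $x=S_iy$ gives
$$\graph(T_i)=\{(S_iy,y):y\in K\}.$$
Because $y\mapsto(S_iy,y)$ is bounded below, this is automatically a closed subspace, so the two systems are genuine two subspace systems and everything below takes place among bounded operators.

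Next I would write the isomorphism down explicitly. Any bounded invertible $W$ on $H=K\oplus K$ fixing $K\oplus 0$ must be block upper triangular, $W=\begin{pmatrix}A&C\\0&B\end{pmatrix}$ with $A,B$ invertible, and inserting the reparametrization into the requirement $W(\graph(T_1))=\graph(T_2)$ collapses to the single bounded operator equation $AS_1+C=S_2B$. The simplest solution is $A=B=I$, $C=S_2-S_1$, giving
$$W=\begin{pmatrix}I&S_2-S_1\\0&I\end{pmatrix}.$$
It then remains only to verify the three defining properties: $W\in B(H)$ since $S_2-S_1\in B(K)$; $W$ is invertible with inverse $\begin{pmatrix}I&S_1-S_2\\0&I\end{pmatrix}$; $W$ fixes $K\oplus 0$ pointwise; and $W(S_1y,y)=(S_2y,y)$, so $W$ carries $\graph(T_1)$ onto $\graph(T_2)$. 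This exhibits the desired bounded isomorphism.

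I do not expect a real obstacle here: once the graphs are rewritten through the $S_i$ the whole argument is a one line matrix computation, and the hypothesis $\|S_1-S_2\|<1$ is in fact not used, since the triangular shape of $W$ makes it invertible for free. (The bound would become relevant only if one insisted on building $W$ as a perturbation $I+N$ of the identity and proving invertibility by a Neumann series.) As a structural alternative that bypasses $W$ entirely, bijectivity of $T_i$ gives $(K\oplus 0)\cap\graph(T_i)=\operatorname{ker}T_i\oplus 0=0$ and $(K\oplus 0)+\graph(T_i)=K\oplus\operatorname{ran}T_i=H$, so Lemma \ref{lemma:decompose} puts each system in the form $(H;M_i,M_i^{\perp})$; since $M_i\cong K\oplus 0$ and $M_i^{\perp}\cong\graph(T_i)$ both have Hilbert dimension $\dim K$, the two orthogonal systems are unitarily isomorphic, hence so are the original systems.
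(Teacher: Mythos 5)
Your construction is correct, and at its core it is the paper's own proof seen through a change of coordinates: the paper first applies the flip $J(x,y)=(y,x)$, turning each system into $(H;0\oplus K,\graph(T_i^{-1}))$, and then applies the lower-triangular shear $\Phi=\begin{pmatrix} I & 0\\ T_2^{-1}-T_1^{-1} & I\end{pmatrix}$; your operator $W=\begin{pmatrix} I & S_2-S_1\\ 0 & I\end{pmatrix}$ is exactly the conjugate $J\Phi J^{-1}$, so you have merely composed the two steps into one explicit map. Two points in your write-up genuinely go beyond the paper. First, your remark that the hypothesis $\Vert T_1^{-1}-T_2^{-1}\Vert<1$ is never used is correct, and it applies equally to the paper's argument: the paper invokes the norm bound to justify invertibility of $\Phi$, but $\Phi=I+N$ with $N$ strictly triangular, $N^2=0$, so $\Phi^{-1}=I-N$ exists unconditionally; the proposition therefore holds without the norm hypothesis. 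Second, your alternative route via Lemma \ref{lemma:decompose} is a genuinely different argument: since each $T_i$ is a bijection of $\mathrm{Dom}(T_i)$ onto $K$ with zero kernel, one has $(K\oplus 0)\cap\graph(T_i)=0$ and $(K\oplus 0)+\graph(T_i)=K\oplus K$, so each system is boundedly isomorphic to an orthogonal pair $(H;M_i,M_i^{\perp})$ with $\dim M_i=\dim M_i^{\perp}=\dim K$, and any two such orthogonal pairs are unitarily isomorphic via a direct sum of unitaries. This proves the stronger statement that both systems are isomorphic to $(H;K\oplus 0,0\oplus K)$, which is precisely what the paper's Remark following Theorem \ref{pro:3sub} asserts using the same lemma. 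The trade-off: the shear argument produces an explicit isomorphism fixing $K\oplus 0$ pointwise, while the dimension-count argument dispenses with every hypothesis except bounded, everywhere-defined invertibility of the $T_i$.
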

\begin{proof}
Let $J(x,y):= (y,x)$ for $x,y\in K$.
Then $J$ gives an bounded isomorphism 
of 
$(H;K\oplus 0,graph(T))$
to
$(H;0\oplus K,graph(T^{-1}))$. 

We put
$\Phi=
\begin{pmatrix}
I & 0 \\
T_{2}^{-1}-T_{1}^{-1} & I
\end{pmatrix}$.
Since
$\vert\vert T_{2}^{-1}-T_{1}^{-1}\vert\vert <1$,
$\Phi$
is invertible
and 
$$\Phi(graph(T_{1}^{-1}))
=graph(T_{2}^{-1}) \text{ and }
\Phi(0\oplus K)
=0\oplus K.$$ 
Hence
$(H;0\oplus K,graph(T_{1}^{-1}))$
and
$(H;0\oplus K,graph(T_{2}^{-1}))$
are boundedly isomorphic.
Therefore two systems 
$(H;K\oplus 0,graph(T_{1}))$
and
$(H;K\oplus 0,graph(T_{2}))$
are boundedly isomorphic.
\end{proof}

We recall a useful lemma in \cite{FW}. 

\begin{lemma}\rm{\cite[Lemma 3.2]{FW}}
Let $A=\int_{0}^{M}\lambda dE_{\lambda}$ and
$B=\int_{0}^{N}\lambda dF_{\lambda}$ be positive operatos 
on a Hilbert space $H$.
Assume that $ran A =ran B$.
Then
there is a positive constant $K\geq 1$
such that
$$\dim E[\alpha,\beta]H\leq \dim F[\alpha/K,K\beta]H.$$
and 
$$\dim F[\alpha,\beta]H\leq \dim E[\alpha/K,K\beta]H.$$
whenever 
$0<\alpha\leq \beta.$

\label{FW}
\end{lemma}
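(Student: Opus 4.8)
The plan is to extract from the hypothesis $\operatorname{ran}A=\operatorname{ran}B$ a two-sided operator comparison between $A$ and $B$, and then to convert that comparison into the asserted inequalities between dimensions of spectral subspaces by a min-max (trivial-intersection) argument.

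First I would record the comparability. By Douglas' range-inclusion theorem (which can also be read off from \cite{FW}), the inclusion $\operatorname{ran}A\subseteq\operatorname{ran}B$ is equivalent to $A^2\le\lambda B^2$ for some $\lambda\ge0$; since $t\mapsto t^{1/2}$ is operator monotone (the Heinz inequality already used in the proof of Proposition \ref{prop:Sh}), $A^2\le\lambda B^2$ gives $A=(A^2)^{1/2}\le(\lambda B^2)^{1/2}=\sqrt\lambda\,B$. Applying this in both directions to the equal ranges produces positive constants with $\tfrac1K B\le A\le K B$, where I take $K\ge1$. This is the only place the hypothesis enters, and I expect it to be routine.

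Next I would prove two one-sided (half-line) dimension inequalities. Fix $0<\alpha\le\beta$ and set $\mathcal M=E[\alpha,\infty)H$. For every nonzero $\eta\in\mathcal M$ we have $\langle A\eta,\eta\rangle\ge\alpha\|\eta\|^2$, hence $\langle B\eta,\eta\rangle\ge\tfrac1K\langle A\eta,\eta\rangle\ge\tfrac{\alpha}{K}\|\eta\|^2$; on the other hand every nonzero $\eta\in F[0,\alpha/K)H$ satisfies $\langle B\eta,\eta\rangle<\tfrac{\alpha}{K}\|\eta\|^2$, since the spectral measure of $B$ for $\eta$ is supported in $[0,\alpha/K)$. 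Thus $\mathcal M\cap F[0,\alpha/K)H=0$, so $F[\alpha/K,\infty)$ is injective on $\mathcal M$ and $\dim E[\alpha,\infty)H\le\dim F[\alpha/K,\infty)H$. The symmetric argument, using $B\le KA$ and interchanging the roles of the operators, gives $\dim F(K\beta,\infty)H\le\dim E(\beta,\infty)H$.

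Finally I would assemble the interval estimate. Since $[\alpha,\infty)=[\alpha,\beta]\sqcup(\beta,\infty)$ and $[\alpha/K,\infty)=[\alpha/K,K\beta]\sqcup(K\beta,\infty)$, the commuting spectral projections give orthogonal decompositions $E[\alpha,\infty)H=E[\alpha,\beta]H\oplus E(\beta,\infty)H$ and $F[\alpha/K,\infty)H=F[\alpha/K,K\beta]H\oplus F(K\beta,\infty)H$. Writing $a,d_A,a'$ and $b,d_B,b'$ for the respective dimensions, the two half-line inequalities read $a\le b$ and $b'\le a'$, while the decompositions give $a=d_A+a'$ and $b=d_B+b'$; when the dimensions are finite these force $a+b'\le b+a'$, i.e. $d_A\le d_B$, which is exactly $\dim E[\alpha,\beta]H\le\dim F[\alpha/K,K\beta]H$, and the reverse inequality follows by interchanging $A$ and $B$. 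The hard part will be the genuinely infinite-dimensional case: the naive two-sided min-max fails, because a single vector may have $A$-Rayleigh quotient in $[\alpha,\beta]$ while its $B$-spectral mass is split far below $\alpha/K$ and far above $K\beta$, so $F[\alpha/K,K\beta]$ need not be injective on $E[\alpha,\beta]H$, and one cannot cancel infinite cardinals in $a=d_A+a'$. For the compact (in particular Schatten-class) operators to which the lemma is applied in this paper the subspaces $E[\alpha,\infty)H$ are finite-dimensional, so the finite argument suffices; the general statement needs a direct comparison of the two interval subspaces, and that is where I would concentrate the remaining effort.
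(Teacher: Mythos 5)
Your first two steps are fine: Douglas' theorem plus the Heinz inequality does give $K_0^{-1}B\le A\le K_0B$ (together with $A^2\le K_0^2B^2$ and $B^2\le K_0^2A^2$), and your two half-line estimates $\dim E[\alpha,\infty)H\le\dim F[\alpha/K,\infty)H$ and $\dim F(K\beta,\infty)H\le\dim E(\beta,\infty)H$ are correctly proved by the trivial-intersection argument. But note that this paper does not prove the lemma at all --- it quotes it from \cite[Lemma 3.2]{FW} --- and your proposal does not prove it either: the assembly step cancels cardinals in $a=d_A+a'$ and $b=d_B+b'$, which is legitimate only when these are finite, and you concede that the infinite case is open. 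That concession is fatal, and your fallback claim --- that in this paper the lemma is only applied where $E[\alpha,\infty)H$ is finite dimensional --- is false. The paper applies the lemma to $A'\oplus C$ versus $A'\oplus D$, where $A'$ is the diagonal operator with constant diagonal $2$, and to $T\oplus C$ versus $T\oplus D$ with $T$ multiplication by $t$ on $L^2[2,3]$; these operators are not compact, and for the intervals used there (with $\beta=1/K<2$) the subspaces $E(\beta,\infty)H$ and $F(K\beta,\infty)H$ are infinite dimensional, so one is exactly in the situation $a=a'=b=b'=\aleph_0$ where your cancellation yields nothing. Thus the proposal fails even for the uses of the lemma in this paper.

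Moreover, the ``direct comparison of the two interval subspaces'' that you defer cannot be carried out by Rayleigh-quotient estimates alone: a unit vector $x\in E[\alpha,\beta]H$ could a priori have a $B$-spectral component of norm of order $1/K$ lying far above $K\beta$, whose contribution to $\langle Bx,x\rangle$ and $\langle B^2x,x\rangle$ is of order $\|B\|/K$, resp.\ $\|B\|^2/K^2$, and this swamps $\alpha$ when $\alpha$ is small; any constant extracted from such estimates blows up as $\alpha\to0$, whereas the lemma requires one $K$ valid for all $0<\alpha\le\beta$. The missing idea (and it is how Fillmore--Williams argue) is to use membership in the common range quantitatively. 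Since $A\ge\alpha$ on $E[\alpha,\beta]H$, every $x\in E[\alpha,\beta]H$ can be written $x=Av$ with $\|v\|\le\|x\|/\alpha$; the Douglas factorization $A=BC$ with $\|C\|\le K_0$ then gives $x=Bw$ with $\|w\|\le(K_0/\alpha)\|x\|$, so the lower tail obeys
$$\|F[0,\alpha/K)x\|=\|BF[0,\alpha/K)w\|\le(\alpha/K)(K_0/\alpha)\|x\|=(K_0/K)\|x\|,$$
while the upper tail obeys
$$\|F(K\beta,\infty)x\|\le\|Bx\|/(K\beta)\le K_0\|Ax\|/(K\beta)\le(K_0/K)\|x\|.$$
Taking $K=2K_0$, the projection $F[\alpha/K,K\beta]$ is bounded below by $1/\sqrt{2}$ on $E[\alpha,\beta]H$, hence injective there, which gives $\dim E[\alpha,\beta]H\le\dim F[\alpha/K,K\beta]H$ uniformly in $\alpha\le\beta$ and in any dimension; interchanging $A$ and $B$ gives the other inequality. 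This single estimate replaces your half-line/cancellation scheme entirely, and it is where you should concentrate the effort.
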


The above lemma \ref{FW} enables us to consider the following examples.

\noindent
{\bf Example.} Let 
$A$ (res.$B$,$C$,$D$) be the diagonal operator with diagonals 
$(n^2)_{n}$ (resp.$(n^3)_{n}$, $(1/n^2)_{n}$, $(1/n^3)_{n}$)
on $K=\ell^2(\mathbb{N})$.
Then
$(H\oplus H;H\oplus 0,graph (A\oplus C))$
and
$(H\oplus H;H\oplus 0,graph (B\oplus D))$
are not boundedly isomorphic. 
In fact, let 
$A^{\prime}$ be the diagonal operator with diagonals $(2)_n$ on $K=\ell^2(\mathbb{N})$.
Then
$(H\oplus H;H\oplus 0,graph(A\oplus C))$
and
$(H\oplus H;H\oplus 0,graph(A^{\prime}\oplus C))$
are boundedly isomorphic, and 
$(H;K\oplus 0,graph(B\oplus D))$
and
$(H;K\oplus 0,graph(A^{\prime}\oplus D))$
are also boundedly isomorphic. 
Put $c_n = 1/n^2$ and $d_n = 1/n^3$. 
Let $E$ be a spectral measure for $A^{\prime}\oplus C$
and 
$F$ a spectral measure for $A^{\prime}\oplus D$. 

By lemma \ref{FW}, ,
there exists a positive constant $K\geq 1$
such that
\begin{align*}
{}&\dim(E[\alpha,1/K]H)={}^{\#}\{n\in {\mathbb N};\alpha\leq 
c_n\leq (1/K)\}\\
{}&\leq \dim(F[\alpha/K,1]H)=
{}^{\#}\{n\in {\mathbb N};\alpha/K\leq 
 d_{n}\leq 1\} \\
{}&={}^{\#}\{n\in {\mathbb N};\alpha\leq K 
 d_{n}\leq K\}.
\end{align*}
Put 
$$
m_0 := \min\{m\in {\mathbb N};c_m \leq 1/K \}
$$
For any $n \in  {\mathbb N}$, put $\alpha = c_{n + m_0}$. Then
$$
n+1 = {}^{\#}\{\ell \in {\mathbb N} ; \alpha \leq c_{\ell} \leq 1/K \} 
\leq  {}^{\#}\{\ell \in {\mathbb N} ; \alpha \leq Kd_{\ell} \leq K \}. 
$$
Hence $\alpha \leq Kd_{n+1} \leq Kd_n$. Therefore 
$c_{n + m_0} \leq Kd_n$. Thus for any $n \in {\mathbb N}$, 
we have that 
$$
n^3/(n+m_0)^2\leq K
$$
This implies a contradiction. Therefore 
these two subspaces are not boundedly isomorphic.

Next we give another example. 

\noindent
{\bf Example.} 
Let $T\in B(L^2[2,3])$ be a multiplication operator 
defined by
$(Tf)(t)=tf(t)$ for $f\in L^2[2,3]$.
Let 
$C$ be the diagonal operator with diagonals $(1/n^2)_{n}$
and $D$ be the diagonal operator with diagonals $(1/n^3)_{n}$
on $K=\ell^2(\mathbb{N})$.
We put $H=K\oplus K$.
Then
operators $T\oplus C$ and $T\oplus D$
have continuous spectrum
and 
$(H\oplus H;H\oplus 0,graph (T\oplus C))$
and
$(H\oplus H;H\oplus 0,graph (T\oplus D))$
are not boundedly isomorphic. 

Use lemma \ref{FW} similarly for the intervals not containing the interval 
$[2,3]$.


\end{document}